\newtheorem{theorem}{Theorem} 
\newtheorem{lemma}[theorem]{Lemma}     
\newtheorem{corollary}[theorem]{Corollary}
\newtheorem{definition}{Definition}
\newtheorem{question}{Question}
\begin{document}
\label{begin-art}

\begin{center}
{\LARGE\bfseries Full operators and algebras generated by invertible operators \par }

\vspace{5mm}
{\large Wilson R. Pacheco R (\url{wpacheco@luz.edu.ve})}\\[1mm]
Departamento de Matem\'{a}ticas, FEC\\ Universidad del Zulia\\
   Maracaibo, Venezuela
\end{center}

\begin{abstract}
In the present work we characterize full operators and show some properties for bounded below nonfull operators. Under the results developed for full operators, we affirmatively respond two questions formulated by Bravo and Feintuch about algebras generated by invertible operators.

{\em Key words: }full operator, approximation by polynomials, bounded below operator.

{\em AMS Subject Class. (2000):} 47A05, 47A58, 47B06
\end{abstract}

\section{Introduction}
\label{intro}
    It is a well known fact from linear algebra that if $T$ is an
invertible operator on a finite dimensional space, then $T^{-1}$ is a
polynomial in $T$. This fact is false if the vector space is infinite
dimensional. More over, there are examples in which $T^{-1}$ is not even the
limit of polynomials in $T$. The bilateral shift operator in $l_{2}(Z)$ is a
good example of this fact.

We will denote by $\mathtt{A}_{T}$ the weakly closed algebra generated by $T$
and the identity operator. If $T^{-1}$ belongs to $\mathtt{A}_{T}$, then $T^{-1}$ can be weakly approximated by polynomials in $T$. It follows that any invariant subspace for $T$ is also invariant for $T^{-1}$. The problem of determining when  $T^{-1}$ belongs to $\mathtt{A}_{T}$ has been studied for several authors (see \cite{Bravo}, \cite{Erdos1}, \cite{Erdos2}, \cite{Feintuch1}, \cite{Feintuch2}, \cite{Feintuch3}, \cite{Ortuñez} ).

 If $\mathrm{lat}T$ denotes the lattice of the invariant subspaces  for $T$,
then the previous result assures that $TM=M$ for all $M\in \mathrm{lat}T$.

An operator that satisfies $\overline{TM}=M$ for all $M\in \mathrm{lat}T$%
, is called a full operator. Therefore, if $T^{-1}$ belongs to $\mathtt{A}%
_{T} $, then $T$ is necessarily full.

In \cite{Bravo} Bravo studies conditions under which $\ T^{-1}$ belongs to the
weak algebra generated by $T$ and the identity operator. He also
characterized the full operators among other things. In his work the following conjecture is stated:

\begin{question}
If $T$ is an invertible operator and $\mathtt{A}_{T}$ contains an injective
quasinilpotent operator $R$ then $T^{-1}\in \mathtt{A}_{T}$.
\end{question}

In \cite{Feintuch2} Feintuch made a similar conjecture, with the quasinilpotent condition
on $R$ replaced by compactness.

In this work we characterize full operators and show some properties
for bounded below nonfull operators. Based on our results developed for
full operators we prove both Bravo and Feintuch's conjectures.

Let $H$ be a separable Hilbert space, and $L(H)$ be the algebra of all bounded
operators on $H$. We denote by $\mathrm{lat}T$ the lattice of all
invariant subspaces for $T$, i.e, $\mathrm{lat}T=\{M:M$ is closed and$\
TM\subset M\}$.

\begin{definition}
An operator $T\in L(H)$ is called a full operator if $\overline{TM}=M$ for
all $M\in \mathrm{lat}T$.
\end{definition}
Full operators were introduced by Erdos in \cite{Erdos1}.

In what follows, $\mathtt{A}_{T}$ will denote the weak algebra generated by $T$ and the
identity operator, $\mathrm{alglat}T=\left\{ S\in L(H):\mathrm{lat}T\subset \mathrm{lat}S\right\} $, and $\left\{T\right\} ^{\prime }$ is
the conmutant of $T$.

It is known that, if $T$ is invertible and $T^{\left( n\right) }$ is full for every $n\in N$, where $T^{\left( n\right) }$ is the direct sum of $n $
copies of $T$, then $T^{-1}\in $ $\mathtt{A}_{T}$ (see Corollary 1.2.3 in \cite{Bravo}).

Given $x\in H$, $T\in L(H)$ and $n\in N\cup \{0\}$, we denote by $M(n,x,T)$ the closure of the $T$ - cyclical subspace generated by $T^{n}x$.

\begin{lemma}
Let $T\in L(H)$ and let $M\in \mathrm{lat}\,T$ be such that $\overline{TM}\varsubsetneqq M$. Then there exists $x\in M$, with $\left\Vert x\right\Vert =1$,
such that $\ x\in M(1,x,T)^{\bot }$.

If $T$ is bounded below, then $T^{n-1}x\notin M(n,x,T)$. In particular, dim$\left(
M(n,x,T)\right) =\infty $ for all $n\in N$
\end{lemma}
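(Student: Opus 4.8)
The plan is to treat the two assertions in turn, with the second built on the first by a finite descent.

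\textbf{First assertion.} I would observe that $\overline{TM}$ is a \emph{proper} closed subspace of the Hilbert space $M$, so its orthogonal complement inside $M$ is nonzero; choose a unit vector $x$ in it. By construction $x\in M$, $\|x\|=1$, and $x\perp\overline{TM}$. Since $M\in\mathrm{lat}\,T$ we have $T^{k}x\in TM\subseteq\overline{TM}$ for every $k\geq 1$, hence $x\perp T^{k}x$ for all $k\geq 1$. As $M(1,x,T)$ is the closed linear span of $\{T^{k}x:k\geq 1\}$, this gives $x\in M(1,x,T)^{\bot}$; in particular $x\notin M(1,x,T)$ because $\|x\|=1$.

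\textbf{Second assertion.} Assume $T$ is bounded below, so $T$ is injective, has closed range, and is a homeomorphism of $H$ onto $\mathrm{ran}(T)$. Consequently $T$ carries the closed linear span of any subset $S\subseteq H$ onto the closed linear span of $TS$: the inclusion $\subseteq$ is continuity, and $\supseteq$ follows because a convergent sequence in $T\cdot\mathrm{span}(S)$ pulls back along the bounded-below $T$ to a Cauchy sequence in $\mathrm{span}(S)$. Applying this to $S=\{T^{m}x:m\geq n-1\}$ gives the clean identity $T\,M(n-1,x,T)=M(n,x,T)$ for every $n\geq 1$. Now I argue by induction on $n$ that $T^{n-1}x\notin M(n,x,T)$: the base case $n=1$ is the last remark above, and for the inductive step, if $T^{n-1}x\in M(n,x,T)=T\,M(n-1,x,T)$ then $T^{n-1}x=Tw$ for some $w\in M(n-1,x,T)$; since also $T^{n-1}x=T(T^{n-2}x)$ and $T$ is injective, $w=T^{n-2}x$, so $T^{n-2}x\in M(n-1,x,T)$, contradicting the induction hypothesis.

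\textbf{Infinite dimension.} From $T^{k}x\in M(k,x,T)\setminus M(k+1,x,T)$ (the membership is immediate, the non-membership is the claim just proved with $n=k+1$) one reads off that $M(n,x,T)\supsetneq M(n+1,x,T)\supsetneq\cdots$ is an infinite strictly decreasing chain of subspaces, which forces $\dim M(n,x,T)=\infty$. The only point needing care is the assertion that a bounded-below operator intertwines "closed linear span of $S$" with "closed linear span of $TS$" — this is exactly what makes the identity $T\,M(n-1,x,T)=M(n,x,T)$ available and thus powers the descent; everything else is routine.
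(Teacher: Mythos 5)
Your proof is correct. The first assertion is handled exactly as in the paper: pick a unit vector $x\in M\ominus\overline{TM}$ and note that $M(1,x,T)\subseteq\overline{TM}$, so $x\in M(1,x,T)^{\bot}$. For the second assertion you take a genuinely different route. The paper argues in one stroke: if $T^{n-1}x\in M(n,x,T)$, then $T^{n-1}x$ is a limit of vectors $p_{\alpha}(T)T^{n}x=T^{n-1}q_{\alpha}(T)x$ with $q_{\alpha}(T)x\in M(1,x,T)$, and since $T^{n-1}$ is itself bounded below one cancels it in the limit to get $q_{\alpha}(T)x\to x$, contradicting $x\perp M(1,x,T)$. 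You instead first establish the structural identity $T\,M(n-1,x,T)=M(n,x,T)$ (using that a bounded below operator carries closed subspaces onto closed subspaces) and then descend by induction, cancelling one factor of $T$ at a time via injectivity. Both arguments rest on the same property of $T$; the paper's is shorter, while yours isolates a reusable fact --- that the cyclic subspaces $M(n,x,T)$ form an exact $T$-orbit of closed subspaces --- and avoids nets of polynomials entirely. Your treatment of the dimension claim, via the strictly decreasing chain $M(n,x,T)\varsupsetneqq M(n+1,x,T)\varsupsetneqq\cdots$, also fills in a step the paper leaves implicit.
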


\begin{proof}
Since $\overline{TM}\varsubsetneqq M$, we can find $x\in M\cap \overline{TM}^{\bot
}$  with $\left\Vert x\right\Vert =1$. Since $\ M(1,x,T)\subset \overline{TM}$, then
$\overline{TM}^{\bot }\subset M(1,x,T)^{\bot }$. Therefore, $x\in
M(1,x,T)^{\bot }$.

On the other hand, if $T$ is bounded below then $T^{n-1}x\notin \
M(n,x,T)$. Indeed, let's suppose that there exists $\left\{ p_{\alpha }\right\} $ such
that $T^{n-1}x\longleftarrow p_{\alpha }(T^{n})x=T^{n-1}q_{\alpha }(T)x$.
Then $T^{n-1}\left( x-q_{\alpha }(T)x\right) \longrightarrow 0$, and since $T$
is bounded below, we have $x-q_{\alpha }(T)x\longrightarrow 0$, which implies that $x\in M(1,x,T)$, contradicting the fact that $x\in M(1,x,T)^{\bot }.$
\end{proof}

\begin{theorem}
Let $T\in L(H)$. The following propositions are equivalent:

\begin{enumerate}
\item $T$ is full.

\item If $x\in M(1,x,T)^{\bot }$, then $x=0$.
\end{enumerate}
\end{theorem}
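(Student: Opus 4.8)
The plan is to prove the two implications separately, noting that the Lemma already supplies essentially all of one direction.

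For the implication $(ii)\Rightarrow(i)$ I would argue by contraposition. If $T$ is not full, there is some $M\in\mathrm{lat}\,T$ with $\overline{TM}\varsubsetneqq M$, and the Lemma then yields a vector $x\in M$ with $\left\Vert x\right\Vert=1$ and $x\in M(1,x,T)^{\bot}$. Thus condition $(ii)$ fails, which is exactly what contraposition requires; no further work is needed here.

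For the implication $(i)\Rightarrow(ii)$, assume $T$ is full and let $x\in M(1,x,T)^{\bot}$. The key move is to apply fullness to the cyclic subspace $N:=M(0,x,T)$, i.e.\ the closure of $\mathrm{span}\{T^{n}x:n\geq 0\}$. This $N$ is closed by definition and $T$-invariant, so $N\in\mathrm{lat}\,T$. I would then verify the identity $\overline{TN}=M(1,x,T)$: for the inclusion $\overline{TN}\subseteq M(1,x,T)$, note that $T$ carries $\mathrm{span}\{T^{n}x:n\geq 0\}$ into $\mathrm{span}\{T^{n}x:n\geq 1\}$, and since $T$ is continuous it carries limits of such vectors into the closed space $M(1,x,T)$; for the reverse inclusion, each $T^{n}x$ with $n\geq 1$ equals $T(T^{n-1}x)\in TN$, so $M(1,x,T)\subseteq\overline{TN}$. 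Fullness now gives $M(1,x,T)=\overline{TN}=N$, hence in particular $x\in N=M(1,x,T)$. Combining this with the hypothesis $x\in M(1,x,T)^{\bot}$ forces $x\perp x$, and therefore $x=0$.

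The argument is short, and the only point that needs a little care is the identity $\overline{TN}=M(1,x,T)$, because $TN$ itself need not be closed; one must pass to the closure and use continuity of $T$ to handle $T$ applied to limits of polynomials in $T$ acting on $x$. Everything else follows directly from the definitions of $M(0,x,T)$, $M(1,x,T)$, fullness, and the Lemma.
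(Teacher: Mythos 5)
Your proposal is correct and follows essentially the same route as the paper: the implication $(ii)\Rightarrow(i)$ is exactly the application of the Lemma, and the implication $(i)\Rightarrow(ii)$ rests on the same key identity $\overline{TM(0,x,T)}=M(1,x,T)$, which the paper uses implicitly and you verify explicitly. The only cosmetic difference is that you argue the second implication directly while the paper phrases it as a contradiction.
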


\begin{proof}
Let's suppose that (ii) is true and  $T$ is not full. Then, there exists $\ M\in \mathrm{lat}\,T$ such that $TM\varsubsetneqq M.$

Therefore, by lemma 1 there exists $x\in M,$ with $x\neq 0,$ such that $x\in M(1,x,T)^{\bot },$ which contradicts (ii).

Let's  suppose now that (i) is true and that there exists $x\neq 0$ such that $x\in M(1,x,T)^{\bot }.$ Then $x\notin M(1,x,T)=\overline{TM(0,x,T)}$. Therefore $\overline{TM(0,x,T)}\varsubsetneqq M(0,x,T)$, which contradicts the fact that $T$ is full.
\end{proof}

\begin{theorem}
Let $T$ be an injective operator and suppose that $\mathrm{alglat}\, T$
contains a quasinilpotent full operator $Q$. Then $T$ is full.
\end{theorem}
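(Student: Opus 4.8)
The plan is to argue by contradiction, invoking the characterization of full operators in the preceding theorem \emph{twice}: once for $T$, to manufacture a concrete cyclic obstruction, and once for $Q$, to destroy it.

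Suppose $T$ is not full. By the characterization theorem there is a vector $x\neq 0$, which we may normalize so that $\Vert x\Vert=1$, with $x\in M(1,x,T)^{\bot}$. Put $M_{0}=M(0,x,T)$ and $M_{1}=M(1,x,T)=\overline{TM_{0}}$; both lie in $\mathrm{lat}\,T$. Since $x\in M_{0}$, $x\perp M_{1}$ and $x\neq 0$, the algebraic sum $\mathbb{C}x+M_{1}$ is already closed and equals $M_{0}$, so $M_{0}=\mathbb{C}x\oplus M_{1}$ is an orthogonal decomposition in which $M_{1}$ has codimension one. Because $Q\in\mathrm{alglat}\,T$ we have $\mathrm{lat}\,T\subseteq\mathrm{lat}\,Q$; in particular $QM_{0}\subseteq M_{0}$ and $QM_{1}\subseteq M_{1}$.

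The first key step is to show $\langle Qx,x\rangle=0$. Write $Qx=ax+m$ with $a=\langle Qx,x\rangle$ and $m\in M_{1}$. Since $QM_{1}\subseteq M_{1}$, a straightforward induction (equivalently: the matrix of $Q|_{M_{0}}$ with respect to $\mathbb{C}x\oplus M_{1}$ is upper triangular, so its $n$-th power has $(1,1)$-entry $a^{n}$) gives $Q^{n}x=a^{n}x+m_{n}$ with $m_{n}\in M_{1}$, whence $\langle Q^{n}x,x\rangle=a^{n}$ for every $n$. Therefore $|a|^{n}=|\langle Q^{n}x,x\rangle|\le\Vert Q^{n}\Vert$, so $|a|\le\Vert Q^{n}\Vert^{1/n}\to 0$ because $Q$ is quasinilpotent; hence $a=0$ and $Qx\in M_{1}$.

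The second key step feeds this back into the fullness of $Q$. From $Qx\in M_{1}$ and $QM_{1}\subseteq M_{1}$ we get $Q^{n}x\in M_{1}$ for all $n\ge 1$, so $M(1,x,Q)=\overline{\mathrm{span}\{Q^{n}x:n\ge 1\}}\subseteq M_{1}$, which is orthogonal to $x$. Thus $x\in M(1,x,Q)^{\bot}$ with $x\neq 0$, contradicting the characterization theorem applied to the full operator $Q$; hence $T$ is full. I expect the only delicate points to be the verification that $\mathbb{C}x+M_{1}$ is closed (so that $M_{0}=\mathbb{C}x\oplus M_{1}$) and the observation that quasinilpotence forces the diagonal entry $a$ to vanish; everything else is routine manipulation of cyclic subspaces. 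Injectivity of $T$ does not seem to be needed for this particular argument, although it is of course available in the situations where the theorem will be applied.
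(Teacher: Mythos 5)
Your proof is correct and is essentially the paper's own argument: both decompose $M(0,x,T)=\mathbb{C}x\oplus M(1,x,T)$, use $Q\in\mathrm{alglat}\,T$ to write $Q^{n}x=a^{n}x+m_{n}$ with $m_{n}\in M(1,x,T)$, force $a=0$ from quasinilpotence, and then contradict the fullness of $Q$ via the characterization theorem. Your write-up is if anything slightly more careful (the closedness of $\mathbb{C}x+M(1,x,T)$ and the observation that injectivity of $T$ is not actually used in the argument), but the route is the same.
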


\begin{proof}
If that $T$ is not full, there exists $x\in H$, $x\neq 0$, and $\left\Vert x\right\Vert =1,$ such that $\ x\in M(1,x,T)^{\bot }$.

Since $\ M(0,x,T)\in \mathrm{lat}Q,$ we have that $Qx=\alpha x+y$, where $y\in M(1,x,T)$.

For each $n\in N$, we have that, $Q^{n}x=\alpha ^{n}x+y_{n}$, where $y_{n}\in M(1,x,T)$.

Then, $\left\vert \alpha \right\vert =\left\vert \left\langle \alpha
^{n}x+y_{n},x\right\rangle \right\vert ^{1/n}=\left\vert \left\langle
Q^{n}x,x\right\rangle \right\vert ^{1/n}\leq \left\Vert Q^{n}\right\Vert ^{\frac{1}{n}}$.

Since $Q$ is quasinilpotent, we have $\alpha =0$. Therefore $\left\langle
Q^{n}x,x\right\rangle =0$, and by theorem 4, it follows that $Q$ can not be full in
contradiction with the hypothesis.
\end{proof}

\begin{corollary}
Let $T$ be a invertible operator and suppose that $\mathrm{alglat}\,T\cap\left\{T\right\} ^\prime$ contains a compact full operator K. Then $T$ is full.
\end{corollary}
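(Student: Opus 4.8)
\emph{Plan.} An invertible operator is injective, so by the previous theorem it is enough to produce a quasinilpotent full operator inside $\mathrm{alglat}\,T$; I will get one by peeling off from $K$ the finite-dimensional generalized eigenspaces of its nonzero eigenvalues. If $K$ is already quasinilpotent there is nothing to prove, so assume $\sigma(K)\neq\{0\}$.

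\emph{Steps.} Since $K$ is compact, every nonzero $\lambda\in\sigma(K)$ is isolated with finite-dimensional generalized eigenspace; let $P_{\lambda}$ be the associated Riesz projection. Because $K\in\{T\}'$ one has $P_{\lambda}\in\{K\}''\subseteq\{T\}'$, so $P_{\lambda}$ commutes with $T$. Moreover $P_{\lambda}\in\mathrm{alglat}\,T$: if $M\in\mathrm{lat}\,T$ then $M\in\mathrm{lat}\,K$, so $K|_{M}$ is compact and, for $z$ on a small circle around $\lambda$ (hence $z\neq0$ and $z$ not an eigenvalue of $K$), $(zI-K)|_{M}$ is an injective Fredholm operator of index $0$ on $M$, thus invertible on $M$; therefore $M$ is invariant under $(zI-K)^{-1}$ and so under $P_{\lambda}=\frac{1}{2\pi i}\oint(zI-K)^{-1}\,dz$. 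Hence each $M\in\mathrm{lat}\,T$ splits as $M=(M\cap\mathrm{ran}\,P_{\lambda})\oplus(M\cap\ker P_{\lambda})$. Now $\mathrm{ran}\,P_{\lambda}$ is finite-dimensional and $T$-invariant, and since $T$ is invertible, $T|_{\mathrm{ran}\,P_{\lambda}}$ is invertible on a finite-dimensional space, hence full; while $K|_{\ker P_{\lambda}}$ is again compact and full and lies in $\{T|_{\ker P_{\lambda}}\}'\cap\mathrm{alglat}(T|_{\ker P_{\lambda}})$, with $\sigma(K|_{\ker P_{\lambda}})=\sigma(K)\setminus\{\lambda\}$. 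Doing this for all nonzero eigenvalues $\lambda_{1},\lambda_{2},\dots$ and setting $H'=\bigcap_{j}\ker P_{\lambda_{j}}\in\mathrm{lat}\,T$, the operator $K|_{H'}$ is compact, full, and has no nonzero eigenvalue, hence is quasinilpotent; by the previous theorem $T|_{H'}$ is full. It then remains to assemble the pieces: every $M\in\mathrm{lat}\,T$ should be controlled by the subspaces $M\cap\mathrm{ran}\,P_{\lambda_{j}}$ and $M\cap H'$, on each of which $T$ restricts to a full operator, yielding $\overline{TM}=M$.

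\emph{Main obstacle.} The genuinely delicate step is the last one: for a non-self-adjoint compact operator the Riesz projections $P_{\lambda_{j}}$ are neither orthogonal nor do they converge to the identity on the algebraic part, so one has to check carefully that $\mathrm{lat}\,T$ is really coordinatized by $\{\mathrm{ran}\,P_{\lambda_{j}}\}$ together with $H'$ and that passing to closures of images respects this decomposition. If no such clean splitting is available, the fallback is to argue straight from the characterization of full operators: a bad vector $x$ for $T$ satisfies $\langle K^{n}x,x\rangle=\alpha^{n}$ with $\alpha=\langle Kx,x\rangle$ and $y_{n}:=K^{n}x-\alpha^{n}x\in M(1,x,T)$; the case $\alpha=0$ immediately contradicts fullness of $K$ via Theorem~4, and for $\alpha\neq0$ one knows $\alpha$ is an eigenvalue of $K$ and must push $x$ into its finite-dimensional generalized eigenspace --- where $T$ is full --- while keeping the orthogonality $x\perp M(1,x,T)$, and controlling that transport is where the real work lies.
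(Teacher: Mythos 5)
Your handling of a single nonzero eigenvalue is locally sound: the Riesz projection $P_{\lambda}$ of $K$ does lie in $\{T\}'$ and in $\mathrm{alglat}\,T$ (the Fredholm argument for invariance of $M$ under $(zI-K)^{-1}$ is correct), and each $M\in\mathrm{lat}\,T$ splits under one $P_{\lambda}$. But the step you yourself flag as delicate is a genuine gap, and it cannot be closed along the route you propose: for a general (non-spectral) compact operator the closed span of the root subspaces $\mathrm{ran}\,P_{\lambda_{j}}$ together with $H'=\bigcap_{j}\ker P_{\lambda_{j}}$ need not be all of $H$, so $\mathrm{lat}\,T$ is not coordinatized by these pieces and fullness of the restrictions $T|_{\mathrm{ran}\,P_{\lambda_{j}}}$ and $T|_{H'}$ does not assemble into fullness of $T$. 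The fallback sketch also stalls on an unjustified claim: from $\langle K^{n}x,x\rangle=\alpha^{n}$ one only gets $|\alpha|\le r(K)$, not that $\alpha$ is an eigenvalue of $K$, and no mechanism is offered for ``pushing $x$'' into a finite-dimensional root subspace while preserving $x\perp M(1,x,T)$.

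The paper avoids the global decomposition entirely by localizing. Assuming $T$ is not full, it takes a unit vector $x\in M(1,x,T)^{\perp}$, sets $M=M(0,x,T)$, and restricts: $K_{1}=K|_{M}$ is still compact, full, and lies in $\mathrm{alglat}(T|_{M})\cap\{T|_{M}\}'$. Invoking the reduction from Feintuch's proof, one may assume $M$ contains no common invariant subspace of $T$ and $T^{-1}$, hence no finite-dimensional $T$-invariant subspace. Then $\sigma(K_{1})\neq\{0\}$ is impossible --- a nonzero eigenvalue of $K_{1}$ would produce, exactly as in your argument, a finite-dimensional $T$-invariant subspace inside $M$ --- so $K_{1}$ is quasinilpotent and the preceding theorem applies to $T|_{M}$, making $T|_{M}$ full and contradicting $x\neq 0$. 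If you want to repair your proof, replace the global peeling of all eigenvalues by this single localized dichotomy on $M$; the only remaining work is then justifying the ``no finite-dimensional invariant subspace in $M$'' reduction, which the paper itself outsources to Feintuch rather than proving.
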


\begin{proof}

Suppose that $T$ is not full, and let $x\in H$ be such that $ x\in M(1,x,T)^{\bot }$ and $\left\Vert x\right\Vert =1$.

Let  $M=M(0,x,T)$, and notice that $T\left\vert _{M}\right. $ is not full. Let also $K_{1}=K\left\vert _{M}\right. $. Then $K_{1}$ is a compact full operator and $R_{1}\in \mathrm{alglat}\,T\left\vert _{M}\right. \cap\left\{ T\left\vert _{M}\right. \right\} ^\prime $.
Since in the proof on the main theorem in \cite{Feintuch2}, we may as well assume that $M$ contains no common invariant subspace of $T$ and $T^{-1}$. In particular, $T$ does not have finite dimensional
invariant subspaces in $M$

If $\sigma \left( K_{1}\right) =\left\{ 0\right\} $, then $K_{1}$  is
quasinilpotent and we can apply the previous theorem to guarantee that $T\left\vert_{M}\right. $ is full, contradicting the observation made on $T\left\vert_{M}\right. $.

In case that $\sigma \left( K_{1}\right) \neq\left\{ 0\right\} $, $ T$
would have an invariant finite dimensional subspace in $M$, in contradiction
to above assume.
\end{proof}

We now proceed to answer question 1

\begin{theorem}
Let $T\in L(H)$ be invertible and suppose that $\mathtt{A}_{T}$ contains an
injective quasinilpotent operator $Q$. Then $T^{-1}\in $ $\mathtt{A}_{T}$.
\end{theorem}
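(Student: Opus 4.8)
The plan is to deduce the theorem from Corollary~1.2.3 in \cite{Bravo}: since $T$ is invertible, it suffices to show that $T^{(n)}$ is full for every $n\in N$. First I would verify that the hypotheses are stable under amplification. If $p_{\alpha}(T)\to Q$ in the weak operator topology, then $p_{\alpha}(T^{(n)})=p_{\alpha}(T)^{(n)}\to Q^{(n)}$ weakly, so $Q^{(n)}:=Q\oplus\cdots\oplus Q\in\mathtt{A}_{T^{(n)}}$; moreover $Q^{(n)}$ is injective (a direct sum of injective operators) and quasinilpotent, since $\|(Q^{(n)})^{k}\|^{1/k}=\|Q^{k}\|^{1/k}\to 0$; and $T^{(n)}$ is invertible. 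Hence it is enough to prove the following statement, which I shall abbreviate as $(\ast)$: \emph{if $S$ is invertible and $\mathtt{A}_{S}$ contains an injective quasinilpotent operator $R$, then $S$ is full.}

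To prove $(\ast)$ I would argue by contradiction along the lines of the proof of Theorem~3. Suppose $S$ is not full. By Lemma~1 there is a unit vector $x$ with $x\in M(1,x,S)^{\bot}$, and since $S$, being invertible, is bounded below, Lemma~1 also gives $S^{j-1}x\notin M(j,x,S)$ and $\dim M(j,x,S)=\infty$ for all $j$. Now $M(0,x,S)$ and $M(1,x,S)$ both lie in $\mathrm{lat}\,S\subset\mathrm{lat}\,R$, they split orthogonally as $M(0,x,S)=\mathbb{C}x\oplus M(1,x,S)$, and $R$ commutes with $S$ (all elements of $\mathtt{A}_{S}$ commute). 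Writing $Rx=\alpha x+y$ with $y\in M(1,x,S)$ and iterating gives $R^{j}x=\alpha^{j}x+y_{j}$ with $y_{j}\in M(1,x,S)$, so $|\alpha|=|\langle R^{j}x,x\rangle|^{1/j}\le\|R^{j}\|^{1/j}\to 0$ and therefore $\alpha=0$. Thus $Rx=y\in M(1,x,S)$, and $Rx\neq 0$ because $R$ is injective.

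Here the proof of Theorem~3 would conclude by contradicting the fullness of the quasinilpotent operator, but that is not available now (an injective quasinilpotent weighted shift is not full), so the invertibility of $S$ must be exploited. The step I would try to establish is that $S^{-1}R$ again belongs to $\mathtt{A}_{S}$. Since $R$ is quasinilpotent it lies in the radical of the commutative Banach algebra $\mathtt{A}_{S}$; assuming $S^{-1}\notin\mathtt{A}_{S}$ (otherwise we are done), the closed ideal $\overline{S\mathtt{A}_{S}}$ is proper, and one wants $R\in\overline{S\mathtt{A}_{S}}$, so that $R=\lim_{j}SA_{j}$ with $A_{j}\in\mathtt{A}_{S}$ and hence $S^{-1}R=\lim_{j}A_{j}\in\mathtt{A}_{S}$. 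One then checks $S^{-1}R$ is again injective and quasinilpotent (from $(S^{-1}R)^{k}=S^{-k}R^{k}$ and $\rho(S^{-1})\rho(R)=0$), so the step iterates: $S^{-j}R\in\mathtt{A}_{S}\subset\mathrm{alglat}\,S$ for every $j$. Consequently $S^{-j}R$ leaves $M(0,x,S)$ invariant, so $Rx=S^{j}\big((S^{-j}R)x\big)\in S^{j}M(0,x,S)=M(j,x,S)$ for all $j$, i.e. $Rx\in\bigcap_{j}M(j,x,S)$. Now $\bigcap_{j}M(j,x,S)$ is carried onto itself by $S$, hence is a common invariant subspace of $S$ and $S^{-1}$; after the reduction used in the proof of Corollary~4 (following \cite{Feintuch2}) to the case where $S$ and $S^{-1}$ have no nontrivial common invariant subspace, this intersection is $\{0\}$, forcing $Rx=0$ and contradicting $Rx\neq 0$. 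This proves $(\ast)$, and with it the theorem.

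The step I expect to be the real obstacle is establishing $S^{-1}R\in\mathtt{A}_{S}$: it rests on a genuine interaction between the quasinilpotence of $R$ and the ideal structure of $\mathtt{A}_{S}$ (one must locate $R$ inside the closed ideal generated by $S$), and the accompanying reduction to the case with no common invariant subspace of $S$ and $S^{-1}$ also needs care. Everything else — the passage through Corollary~1.2.3 in \cite{Bravo}, the stability of the hypotheses under amplification, and the Theorem~3-style computation producing $Rx\in M(1,x,S)\setminus\{0\}$ — is routine.
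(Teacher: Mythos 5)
Your overall scaffolding matches the paper's: the reduction to showing that $T^{(n)}$ is full via Corollary~1.2.3 of \cite{Bravo}, the stability of the hypotheses under amplification, and the computation producing a unit vector $x\in M(1,x,S)^{\bot}$ with $Rx\in M(1,x,S)\setminus\{0\}$ are all correct and are exactly how the paper begins. The genuine gap is precisely where you flag it, and as written it is not a proof: from ``$R$ lies in the radical of $\mathtt{A}_{S}$'' and ``$\overline{S\mathtt{A}_{S}}$ is a proper closed ideal'' one cannot conclude $R\in\overline{S\mathtt{A}_{S}}$. The radical of a commutative unital Banach algebra is the intersection of the kernels of its characters; a proper closed ideal is \emph{contained in} some maximal ideal but need not \emph{contain} the radical (e.g.\ in $\mathbb{C}[\epsilon]/(\epsilon^{2})\oplus\mathbb{C}[\delta]/(\delta^{2})$ the ideal generated by $(\epsilon,0)$ is proper and closed but misses $(0,\delta)\in\mathrm{rad}$). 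What is missing is the observation that makes the radical argument bite: if $J$ denotes the WOT-closure of $S\mathtt{A}_{S}$, then $\mathbb{C}I+J$ is a WOT-closed subalgebra (a closed subspace plus a finite-dimensional one is closed) containing $I$ and $S$, hence equals $\mathtt{A}_{S}$; so a proper $J$ has codimension one, is the kernel of a character, and only then does $R\in\mathrm{rad}(\mathtt{A}_{S})$ force $R\in J$, after which WOT-continuity of left multiplication by $S^{-1}$ gives $S^{-1}R\in\mathtt{A}_{S}$ and your iteration can start. Without this codimension-one lemma the central chain $S^{-j}R\in\mathtt{A}_{S}$ is unsupported.

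It is worth noting that the paper sidesteps this issue entirely and argues in the opposite direction at the decisive step. Rather than proving $QM\subset T^{j}(M)$ for every $j$ (which, with $\bigcap_{j}T^{j}(M)=\{0\}$, would contradict injectivity at once), it lets $k$ be the \emph{largest} integer with $QM\subset T^{k}(M)$ --- finite exactly because of injectivity and the trivial intersection --- chooses unit vectors $x_{n}$ spanning the one-dimensional spaces $T^{n}(M)\ominus T^{n+1}(M)$, and shows via the two-sided bounds $\|T^{-1}\|^{-1}\le|\alpha_{n}|\le\|T\|$ that the leading coefficients $\beta_{n}$ of $Q$ in this grading satisfy $|\beta_{n}|\ge|\beta_{0}|/(\|T\|\,\|T^{-1}\|)^{k}$, so that $|\beta_{0}\cdots\beta_{n}|^{1/n}$ stays bounded away from zero while $\|Q^{n}x_{0}\|^{1/n}\to 0$. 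So the two arguments diverge genuinely after the common setup: yours, once the ideal-theoretic step is actually proved, is arguably cleaner and more conceptual, but as submitted it rests on an unjustified (and, as a general principle, false) inference at its load-bearing point.
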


\begin{proof}
If $Q$ is an injective quasinilpotent operator in $\mathtt{A}_{T}$, then $
Q^{\left( n\right) }$ is an injective quasinilpotent operator in $\mathtt{A}_{T^{\left( n\right) }}.$ So it suffices to show that the assumptions of the theorem imply that $ T$ is full.

In order to do so, let us assume that $T$ is not full. Then we can find $x\in H$, with $\left\Vert x\right\Vert =1$, such that $\ x\in M(1,x,T)^{\bot }$. Since the above corollary we may as well assume that $M$ contains no common invariant subspace of $T$ and $T^{-1}$. In particular, we have $\bigcap\limits_{n\in N}T^{n}\left(M(0,x,T)\right) =\{0\}$.

Let $M=M(0,x,T)$. Since $Q$ is injective, $QM\neq \{0\}$. Let k be the largest natural number such
that $QM\subset T^{k}\left( M\right) $, and $x_{n}$ be a unitary vector of the
one dimensional space $T^{n}\left( M\right) \ominus T^{n+1}\left( M\right) $.

Now,
\begin{equation}
Tx_{n}=\alpha _{n}x_{n+1}+y_{n+2},
\end{equation}
where $y_{n+2}\in T^{n+2}\left( M\right) $.

By (1) we have
\begin{equation}
\left\vert \alpha _{n}\right\vert =\left\vert \left\langle \alpha
_{n}x_{n+1}+y_{n+2},x_{n+1}\right\rangle \right\vert =\left\vert
\left\langle Tx_{n},x_{n+1}\right\rangle \right\vert \leq \left\Vert
T\right\Vert .
\end{equation}

Similarly, from the equality
\begin{equation}
x_{n}=\alpha _{n}T^{-1}x_{n+1}+T^{-1}y_{n+2},
\end{equation}
we obtain
\begin{equation}
\left\vert \alpha _{n}\right\vert \geq \left\Vert T^{-1}\right\Vert ^{-1}.
\end{equation}
The equation (1) also implies that
\begin{equation}
T^{nk}x_{l}=\left( \alpha _{l}\alpha _{l+1}\cdots \alpha _{l+nk}\right)
x_{l+nk}+z_{l+nk+1},
\end{equation}
where $z_{l+nk+1}\in T^{l+nk+1}\left( M\right) $, for all $l\geq 0$, and $%
n,k\in N$.

On the other hand, $Qx_{0}=\beta _{0}x_{k}+w_{k+1}$, where $w_{k+1}\in
T^{k+1}\left( M\right) $. In general
\begin{equation}
Qx_{nk}=\beta _{n}x_{\left( n+1\right) k}+w_{\left( n+1\right) k+1},
\end{equation}
where $w_{\left( n+1\right) k+1}\in T^{\left( n+1\right) k+1}\left(
M\right) $, because $QT^{nk}(M)\subset T^{nk}Q(M)\subset T^{(n+1)k}(
M)$. By the choice of $k$, we have $\beta _{n}\neq 0$ for all $n$.
Then we can write

\[
\left\langle T^{nk}Qx_{0},x_{\left( n+1\right) k}\right\rangle =\left\langle
\beta _{0}T^{nk}x_{k}+T^{nk}w_{k+1},x_{\left( n+1\right) k}\right\rangle
=\beta _{0}\left( \alpha _{k}\alpha _{k+1}\cdots \alpha _{\left( n+1\right)
k}\right),
\]
and
\begin{eqnarray*}
\left\langle T^{nk}Qx_{0},x_{\left( n+1\right) k}\right\rangle
&=&\left\langle QT^{nk}x_{0},x_{\left( n+1\right) k}\right\rangle \\
&=&\left\langle \left( \alpha _{0}\alpha _{1}\cdots \alpha _{nk}\right)
Qx_{nk}+Qz_{nk+1},x_{\left( n+1\right) k}\right\rangle \\
&=&\beta _{n}\left( \alpha _{0}\alpha _{1}\cdots \alpha _{nk}\right) .
\end{eqnarray*}
Therefore, the inequality
\begin{eqnarray}
\left\vert \frac{\beta _{n}}{\beta _{0}}\right\vert &=&\left\vert \frac{\left( \alpha _{k}\alpha_{k+1}\cdots \alpha _{\left( n+1\right) k}\right) }{\left( \alpha_{0}\alpha _{1}\cdots \alpha _{nk}\right) } \right\vert \nonumber\\
&=&\left\vert \frac{\left(\alpha _{nk+1}\alpha _{nk+2}\cdots \alpha _{\left( n+1\right) k}\right) }{\left( \alpha _{0}\alpha _{1}\cdots \alpha _{k-1}\right) }\right\vert
\geq \frac{1}{m}
\end{eqnarray}
holds for all $n\geq 0$, where $m=\left( \left\Vert T^{-1}\right\Vert \left\Vert
T\right\Vert \right) ^{k}$.

By (6) we have $\left\Vert Q^{n}x_{0}\right\Vert \geq \left\vert \beta
_{0}\beta _{1}\cdots \beta _{n}\right\vert ,$ and since Q is quasinilpotent we have
\begin{equation}
\left\vert \beta _{0}\beta _{1}\cdots \beta _{n}\right\vert ^{\frac{1}{n}%
}\rightarrow 0.
\end{equation}
But equation (7) implies that
\[
\left\vert \beta _{0}\beta _{1}\cdots \beta _{n}\right\vert ^{\frac{1}{n}%
}\geq \frac{\left\vert \beta _{0}\right\vert }{m}0
\]
which clearly contradicts (8). Thus $T$ is necessarily full.
\end{proof}

\begin{corollary}
Let $T\in L(H)$ be an invertible operator and suppose that $\mathtt{A}_{T}$
contains an injective compact operator $K$. Then $T^{-1}\in $ $\mathtt{A}_{T}$.
\end{corollary}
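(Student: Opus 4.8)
The plan is to reduce the corollary to the following claim and prove that claim by adapting the argument of the preceding theorem (the one answering Question~1): \emph{if $S\in L(H)$ is invertible and $\mathtt{A}_S$ contains an injective compact operator, then $S$ is full.} Granting the claim, I note that $K^{(n)}$ is again injective and compact and belongs to $\mathtt{A}_{T^{(n)}}$, so the claim shows $T^{(n)}$ is full for every $n\in N$, and then Corollary~1.2.3 in \cite{Bravo} gives $T^{-1}\in\mathtt{A}_T$. Thus everything comes down to the claim.

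To prove the claim, let $K$ be an injective compact operator in $\mathtt{A}_S$ and suppose, for contradiction, that $S$ is not full. By the characterization of full operators there is a unit vector $x$ with $x\in M(1,x,S)^{\bot}$; set $M=M(0,x,S)$, so that $\overline{SM}=M(1,x,S)\varsubsetneqq M$ and $M\ominus\overline{SM}$ is one dimensional (spanned by $x$). Arguing exactly as in the preceding theorem, I use the reduction of \cite{Feintuch2} to assume in addition that $M$ contains no nonzero subspace invariant under both $S$ and $S^{-1}$; then $M$ contains no nonzero finite dimensional $S$-invariant subspace and $\bigcap_{n\in N}S^{n}(M)=\{0\}$. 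Put $K_1=K|_{M}$. Since $M\in\mathrm{lat}\,S$ and $\mathtt{A}_S\subset\mathrm{alglat}\,S\cap\{S\}'$, the operator $K_1$ is injective and compact, commutes with $S|_{M}$, and leaves each $S^{n}(M)$ and $\overline{SM}$ invariant. If $\sigma(K_1)\neq\{0\}$ then $K_1$, being compact, has a nonzero eigenvalue whose eigenspace is a nonzero finite dimensional $S$-invariant subspace of $M$ --- impossible. Hence $\sigma(K_1)=\{0\}$, that is, $K_1$ is quasinilpotent.

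With $K_1$ quasinilpotent the computations in the proof of the preceding theorem apply with only cosmetic changes, the injective quasinilpotent operator there being replaced by $K_1$ acting on $M$ while the invertible operator is the ambient $S$. First, since $K_1$ preserves $\overline{SM}$ and $x\perp\overline{SM}$, one has $\langle K_1^{n}x,x\rangle=\langle K_1x,x\rangle^{n}$ and $|\langle K_1^{n}x,x\rangle|\leq\|K_1^{n}\|$; as $\|K_1^{n}\|^{1/n}\to 0$, this forces $\langle K_1x,x\rangle=0$, whence $K_1x\in\overline{SM}$ and $KM\subset\overline{SM}=SM$. Consequently there is a largest $k\in N$ with $KM\subset S^{k}(M)$; it is finite because $\bigcap_n S^{n}(M)=\{0\}$ and $K_1$ is injective, and $k\geq 1$ by the previous sentence. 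Taking unit vectors $x_n$ spanning $S^{n}(M)\ominus S^{n+1}(M)$ with $x_0=x$, and repeating $(1)$--$(7)$ verbatim, I obtain $\|S^{-1}\|^{-1}\leq|\alpha_n|\leq\|S\|$ for the coefficients in $Sx_n=\alpha_n x_{n+1}+\cdots$, and, for the coefficients in $Kx_{nk}=\beta_n x_{(n+1)k}+\cdots$, that $\beta_n\neq 0$ for all $n$ (by the maximality of $k$) and $|\beta_n|\geq|\beta_0|/m$ where $m=(\|S\|\,\|S^{-1}\|)^{k}$. Then $\|K^{n}x\|\geq|\beta_0\beta_1\cdots\beta_{n-1}|\geq(|\beta_0|/m)^{n}$, while $\|K^{n}x\|=\|K_1^{n}x\|\leq\|K_1^{n}\|$ with $\|K_1^{n}\|^{1/n}\to 0$; since $|\beta_0|/m>0$ these cannot both hold, a contradiction. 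Hence $S$ is full, which proves the claim.

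The genuinely new point --- and the step I expect to be the main obstacle --- is that a compact operator need not be quasinilpotent, so one cannot invoke the preceding theorem directly. The resolution is precisely the reduction to a cyclic subspace with no nonzero finite dimensional $S$-invariant subspace: it rules out nonzero eigenvalues of $K|_{M}$, so $K|_{M}$ (being compact) is quasinilpotent, after which no new estimate is needed. The other thing to be careful about in the write-up is routine: checking that $K|_{M}$ inherits compactness, injectivity, commutation with $S|_{M}$ and membership in $\mathrm{alglat}(S|_{M})$, all immediate from $M\in\mathrm{lat}\,S$ and $\mathtt{A}_S\subset\mathrm{alglat}\,S\cap\{S\}'$.
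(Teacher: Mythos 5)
Your proof is correct and takes essentially the same route as the paper's: reduce to fullness of $T^{(n)}$ via Bravo's Corollary 1.2.3, restrict $K$ to the cyclic subspace $M$, use the Feintuch reduction to rule out finite dimensional invariant subspaces so that a nonzero point of $\sigma(K|_{M})$ is impossible and $K|_{M}$ is quasinilpotent, and then rerun the computation of the preceding theorem. The paper compresses all of this into two sentences (``apply the previous theorem'' in the quasinilpotent case, ``argue as in corollary 4'' otherwise); you have simply written out, with correct details, exactly what those citations unpack.
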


\begin{proof}
If $K$ is a quasinilpotent operator the result follows from the application of the previous theorem. Otherwise
the proof can be argued in the same fashion as in corollary 4.
\end{proof}

The above corollary responds the question posed by Feintuch.

Theorem 5 are strongly motivated by the proof of theorem 1.2.15 in
\cite{Bravo}.

\label{end-art}

\begin{thebibliography}{99}
\bibitem{Bravo} Bravo, J. {\em Relations between  lat$T$, lat$T^{-1}$,  lat$T^{2}$ and operators with compact imaginary parts}, Ph.D. Dissertation, Berkeley (1980).
\bibitem{Erdos1} Erdos, J. {\em Dissipative operators and approximation of inverse} Bull. London Math. Soc., \textbf{11}(1979), 142--144.
\bibitem{Erdos2} Erdos, J. {\em Single generated algebras containing compact operators} J. of Operator Theory, \textbf{2}(1979), 211--214.
\bibitem{Feintuch1} Feintuch, A. {\em On invertible operators and invariant subspaces}, Proc. A.M.S., \textbf{43} (1974), 123--126.
\bibitem{Feintuch2} Feintuch, A. {\em On algebras generated by invertible operators}, Proc. A.M.S., \textbf{63} (1977), 66--68.
\bibitem{Feintuch3} Feintuch, A. {\em On polynomial approximation of the inverse of an operator}, J. Linear Algebra and its Apli., \textbf{389} (2004), 323--328.
\bibitem{Ortuñez} Ortu\~{n}ez, L. {\em Condiciones para que un \'algebra de operadores este generada por un operador lleno}, Tesis de Maestria del PEAM (LUZ), Maracaibo (1984).
\end{thebibliography}
\end{document}